\newtheorem{thm}{Theorem}
\newtheorem{lem}{Lemma}
\newtheorem{cor}[thm]{Corollary}
\theoremstyle{definition}
\newtheorem{defn}{Definition}
\theoremstyle{remark}
\renewcommand{\int}{\mathop{\rm int}}
\renewcommand{\epsilon}{\varepsilon}
\begin{document}

\title[KKM-type theorems for products\dots]{KKM-type theorems for products of simplices and cutting sets and measures by straight lines}
\author{R.N.~Karasev}
\address{Roman Karasev, Dept. of Mathematics, Moscow Institute of Physics
and Technology, Institutskiy per. 9, Dolgoprudny, Russia 141700}

\thanks{This research was partially supported by the Dynasty Foundation.}

\keywords{Sperner's lemma, KKM theorem, piercing}
\subjclass[2000]{52B11,52C17}

\begin{abstract}
In this paper a version of Knaster-Kuratowski-Mazurkiewicz theorem for products of simplices is formulated. Some corollaries for measure partition in the plane and cutting families of sets in the plane by lines are given.
\end{abstract}

\maketitle

\section{Introduction}

In this paper we generalize the Knaster-Kuratowski-Mazurkiewicz (KKM) theorem~\cite{kkm1929} to coverings of a product of simplices. This result also generalizes the KKM theorem with several coverings from~\cite{bapat1989} and the version of KKM theorem for a product of two simplices of the same dimension in~\cite{tardos1998}. See also~\cite{berg2005} for similar results, where the a simplex is replaced by the $n$-fold symmetric product of a tree.

Some corollaries are given for partitioning a measure in the plane by lines, parrallel to the coordinate axes. Similar results on cutting families of sets in the plane by lines are presented.

Here we formulate the generalization of the KKM theorem.

\begin{defn}
Denote the $n$-dimensional (geometric) simplex $\Delta^n$. The points of $\Delta^n$ will usually be represented by their barycentric coordinates 
$$
(t_1,\ldots, t_{n+1}),\quad \forall i\ t_i\ge 0,\quad \sum_{i=1}^{n+1} t_i = 1.
$$ 
\end{defn}

\begin{defn}
For a simplex $\Delta^n$ denote its facet for $i=1,\ldots, n+1$
$$
\partial_i \Delta^n = \{(t_1,\ldots, t_{n+1})\in\Delta^n : t_i = 0\}.
$$
A facet of a simplex is an $n-1$-dimensional simplex.
\end{defn}

\begin{defn}
Denote $[n] = \{1, 2, \ldots, n\}$.
\end{defn}

\begin{thm}
\label{tardos} Suppose the product of simplices $\Delta^{n-1}\times\Delta^{m-1}$ $(n\le m)$ is covered with $mn$ open (closed) sets $A_{ij}$ $(i\in [n],j\in [m])$ so that every $A_{ij}$ does not intersect $\partial_i\Delta^{n-1}\times \Delta^{m-1}$ and does not intersect $\Delta^{n-1}\times \partial_j\Delta^{m-1}$. Take some positive integers $a_1,\ldots a_n$ that sum up to $m$ (i.e. $\sum_{i=1}^n a_i = m$).

Then there exists a map $\sigma: [m]\to[n]$ such that
$$
\forall i\in[n]\ |\sigma^{-1}(i)| = a_i,\quad \bigcap_{j=1}^m
A_{\sigma(j)j} \neq \emptyset.
$$
\end{thm}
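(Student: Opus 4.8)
The plan is to encode the covering into a continuous self-map of $P:=\Delta^{n-1}\times\Delta^{m-1}$, prove this map surjective by a degree argument, and then read off $\sigma$ from a well-chosen point in a transportation polytope. It suffices to treat open sets: the closed case follows by replacing each closed $A_{ij}$ with its $\epsilon$-neighbourhood (which for small $\epsilon$ still misses the two prescribed facets, since $A_{ij}$ is compact and disjoint from them), applying the open case, and letting $\epsilon\to0$, using that there are only finitely many admissible $\sigma$ and that $P$ is compact.

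So let the $A_{ij}$ be open and fix a partition of unity $\{\phi_{ij}\}$ subordinate to the cover, so that $\phi_{ij}(x)>0$ implies $x\in A_{ij}$ and $\sum_{i,j}\phi_{ij}\equiv 1$. Writing points of $P$ as $(t,s)$ in barycentric coordinates, set $F(x)_i=\sum_{j=1}^m\phi_{ij}(x)$ and $G(x)_j=\sum_{i=1}^n\phi_{ij}(x)$, so that $F\colon P\to\Delta^{n-1}$ and $G\colon P\to\Delta^{m-1}$. The facet conditions are exactly what makes $H:=F\times G\colon P\to P$ respect the boundary strata: if $t_i=0$ then $x\in\partial_i\Delta^{n-1}\times\Delta^{m-1}$, so all $\phi_{ij}(x)=0$ and hence $F(x)_i=0$; symmetrically $s_j=0$ forces $G(x)_j=0$. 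Thus $H$ carries each $\partial_i\Delta^{n-1}\times\Delta^{m-1}$ into itself and each $\Delta^{n-1}\times\partial_j\Delta^{m-1}$ into itself, so $H(\partial P)\subseteq\partial P$.

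Next, $H$ is surjective. The homotopy $H_u:=(1-u)H+u\cdot\mathrm{id}_P$ stays in $P$ (each factor is a convex combination inside a simplex) and preserves the same strata by the vanishing above, so $H|_{\partial P}$ is homotopic to the identity of $\partial P\cong S^{n+m-3}$; a standard no-retraction argument (were an interior point $y$ omitted, compose $H$ with the deformation retraction $P\setminus\{y\}\to\partial P$ to force $\deg(H|_{\partial P})=0$) shows $H$ is onto. Choose $x$ with
\[
H(x)=\Bigl(\tfrac{a_1}{m},\dots,\tfrac{a_n}{m}\,;\,\tfrac1m,\dots,\tfrac1m\Bigr),
\]
a legitimate point of $P$ precisely because $\sum_i a_i=m$. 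Then the matrix $X:=\bigl(m\,\phi_{ij}(x)\bigr)_{i,j}$ has nonnegative entries, $i$-th row sum $a_i$, and every column sum equal to $1$.

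The plan is finished by a convexity step: such matrices form the transportation polytope with row sums $(a_1,\dots,a_n)$ and column sums $(1,\dots,1)$, whose vertices are exactly the $0/1$ matrices with those margins, i.e.\ the matrices $M_\sigma=\bigl([\,\sigma(j)=i\,]\bigr)_{i,j}$ for admissible maps $\sigma$ with $|\sigma^{-1}(i)|=a_i$ (a vertex has acyclic support, and integral margins then force integral, hence $0/1$, entries). Writing $X=\sum_\sigma\lambda_\sigma M_\sigma$ and taking any $\sigma$ with $\lambda_\sigma>0$, we get $m\,\phi_{\sigma(j)j}(x)=X_{\sigma(j)j}\ge\lambda_\sigma>0$ for every $j$, hence $x\in\bigcap_{j=1}^m A_{\sigma(j)j}\neq\emptyset$ for this admissible $\sigma$. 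I expect the only delicate point to be the surjectivity of $H$: one must verify that the facet hypotheses yield precisely the boundary-respecting property needed and that the linear homotopy to the identity never leaves $P$. The role of the special target point is to build the prescribed multiplicities $a_i$ into the row sums, and the vertex description of the transportation polytope is what converts "a point with the right margins" into an honest admissible $\sigma$.
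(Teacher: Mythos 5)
Your proof is correct, and its skeleton coincides with the paper's: a partition of unity subordinate to the open cover, the margin map $H=F\times G$ into $\Delta^{n-1}\times\Delta^{m-1}$, surjectivity of $H$, a point $x$ in the fiber over $(a_1/m,\dots,a_n/m;1/m,\dots,1/m)$, and a combinatorial extraction of $\sigma$ from the positive entries of $\bigl(\phi_{ij}(x)\bigr)$. You implement the two supporting steps differently, and both substitutions check out. For surjectivity, the paper proves an inductive lemma (Lemma~\ref{simmap}) showing that a face-preserving self-map of a simplex induces the identity on top relative cohomology, applies it to the slice maps $f\circ\iota_1$ and $g\circ\iota_2$, and assembles the conclusion for $f\times g$ via the cross product; you instead observe that $H$ itself preserves each boundary stratum (since $t_i=0$ forces $F(x)_i=0$ and $s_j=0$ forces $G(x)_j=0$), so the linear homotopy $(1-u)H+u\,\mathrm{id}$ stays in $\partial P$, whence $H|_{\partial P}$ has degree $1$ and the no-retraction argument gives surjectivity onto the interior (which suffices, as the target point is interior). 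This is more direct and avoids the K\"unneth bookkeeping, at the cost of being specific to the top stratum rather than yielding the face-by-face statement of the lemma. For the extraction step, the paper invokes a generalized Hall lemma with multiplicities $a(v)$, whose hypothesis must be verified from the row and column sums; you instead use the integrality of the transportation polytope with margins $(a_1,\dots,a_n)$ and $(1,\dots,1)$ — vertices have acyclic support, hence are the $0/1$ matrices $M_\sigma$ with $|\sigma^{-1}(i)|=a_i$ — and pick a $\sigma$ appearing with positive weight in a convex decomposition of $\bigl(m\phi_{ij}(x)\bigr)$. The two tools are essentially equivalent (both amount to integrality of the bipartite matching/transportation polytope), but your version sidesteps checking the Hall condition. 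The reduction from closed to open sets via $\epsilon$-neighbourhoods and finiteness of the set of admissible $\sigma$ is the same ``standard compactness reasoning'' the paper alludes to.
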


This theorem generalizes the result from~\cite{tardos1995,tardos1998}, where the case $m=n$ is actually considered. In those papers this theorem was used to deduce a Helly-type theorem on piercing of $2$-intervals, i.e. disjoint unions of two intervals on two separate lines. 

Though Theorem~\ref{tardos} seems to be a slight generalization of known results, it is worth mentioning that it implies most of the results in the author's paper~\cite{kar2005}, because it has the following corollary.

\begin{cor}[Colored KKM theorem]
\label{kkm} Consider $mn$ open (closed) subsets $A_{ij}$ $(i\in [n],j\in [m])$ of a simplex $\Delta^{n-1}$. Suppose that every $A_{ij}$ does not intersect $\partial_i\Delta^{n-1}$, and every family 
$$
\mathcal A_j = \{A_{ij}\}_{i=1}^n
$$
is a covering of $\Delta^{n-1}$. Take some positive integers $a_1,\ldots a_n$ that sum up to $m$ (i.e. $\sum_{i=1}^n a_i = m$).

Then there exists a map $\sigma: [m]\to[n]$ such that
$$
\forall i\in[n]\ |\sigma^{-1}(i)| = a_i,\quad \bigcap_{j=1}^m
A_{\sigma(j)j} \neq \emptyset.
$$
\end{cor}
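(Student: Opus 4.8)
The plan is to lift the data of the corollary from $\Delta^{n-1}$ to the product $\Delta^{n-1}\times\Delta^{m-1}$ and then invoke Theorem~\ref{tardos} directly. (Note that since $a_1,\dots,a_n$ are positive integers summing to $m$ we automatically have $n\le m$, so Theorem~\ref{tardos} is applicable.) The crucial device is a family of auxiliary subsets $U_1,\dots,U_m$ of the second factor $\Delta^{m-1}$ that plays, in the $j$-direction, the role that the facet conditions play in the $i$-direction: I want each $U_j$ to be open (in the open case) or closed (in the closed case), to satisfy $\bigcup_{j=1}^m U_j=\Delta^{m-1}$, and to satisfy $U_j\cap\partial_j\Delta^{m-1}=\emptyset$. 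In the open case one takes $U_j=\{(y_1,\dots,y_m)\in\Delta^{m-1}:y_j>0\}$; in the closed case one takes the ``maximal coordinate'' set $U_j=\{y\in\Delta^{m-1}:y_j\ge y_k\text{ for all }k\in[m]\}$. Both families visibly enjoy the three listed properties (e.g. $U_j\cap\partial_j\Delta^{m-1}$ would force all coordinates to be $\le 0$, which is impossible on $\Delta^{m-1}$).

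Next I would set $B_{ij}=A_{ij}\times U_j\subseteq\Delta^{n-1}\times\Delta^{m-1}$ for $i\in[n]$, $j\in[m]$, and check that the $B_{ij}$ meet the hypotheses of Theorem~\ref{tardos}. Each $B_{ij}$ is open (resp.\ closed) in the subspace topology of the product, being a product of open (resp.\ closed) sets. Given any $(x,y)\in\Delta^{n-1}\times\Delta^{m-1}$, choose $j$ with $y\in U_j$ and then, using that $\mathcal A_j$ covers $\Delta^{n-1}$, choose $i$ with $x\in A_{ij}$; then $(x,y)\in B_{ij}$, so the $B_{ij}$ cover $\Delta^{n-1}\times\Delta^{m-1}$. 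From $A_{ij}\cap\partial_i\Delta^{n-1}=\emptyset$ we get $B_{ij}\cap(\partial_i\Delta^{n-1}\times\Delta^{m-1})=\emptyset$, and from $U_j\cap\partial_j\Delta^{m-1}=\emptyset$ we get $B_{ij}\cap(\Delta^{n-1}\times\partial_j\Delta^{m-1})=\emptyset$.

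Finally I would apply Theorem~\ref{tardos} to the covering $\{B_{ij}\}$ with the same integers $a_1,\dots,a_n$. This produces a map $\sigma:[m]\to[n]$ with $|\sigma^{-1}(i)|=a_i$ for every $i$ together with a point $(x,y)\in\bigcap_{j=1}^m B_{\sigma(j)j}$. By the definition of $B_{ij}$ this forces $x\in A_{\sigma(j)j}$ for every $j$, hence $x\in\bigcap_{j=1}^m A_{\sigma(j)j}\neq\emptyset$, which is exactly the claimed conclusion.

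I do not expect a genuine obstacle: the corollary is essentially a ``one-variable shadow'' of Theorem~\ref{tardos}, and the proof is a short reduction. The only point that needs care is the choice of the auxiliary sets $U_j$ in the second factor, which must simultaneously cover $\Delta^{m-1}$ and stay off the facets $\partial_j\Delta^{m-1}$; in the closed version one cannot use the non-closed sets $\{y_j>0\}$, but the maximal-coordinate sets do the job. One should also verify, which is immediate, that $A_{ij}\times U_j$ is genuinely open (resp.\ closed) in $\Delta^{n-1}\times\Delta^{m-1}$ and that the product covering inherits the two facet-avoidance conditions, as checked above.
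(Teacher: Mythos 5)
Your reduction is correct. The paper states Corollary~\ref{kkm} without giving an explicit proof, and your argument --- building the auxiliary cover $U_j$ of $\Delta^{m-1}$ (coordinate-positivity sets in the open case, maximal-coordinate sets in the closed case) and applying Theorem~\ref{tardos} to $B_{ij}=A_{ij}\times U_j$ --- is exactly the natural intended deduction, with all hypotheses (covering, the two facet-avoidance conditions, and $n\le m$) duly checked.
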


Next we formulate some corollaries of Theorem~\ref{tardos} for line transversals in the plane.

\begin{cor}
\label{squarepart} Let $\mu$ be an absolutely continuous probabilistic measure in the square $Q = [0,1]\times[0,1]$. We shall consider pairs of partitions of the segments $[0,1] = I_1\cup I_2 \cup\ldots \cup I_n$ and $[0,1] = J_1\cup J_2 \cup\ldots \cup J_m$ ($n\le m$) into lesser segments and the corresponding partitions of $Q$ into rectangles
$$
Q = \bigcup_{i\in[n],j\in[m]} I_i\times J_j.
$$

Fix some $c>0$. Then either there exists a pair of partitions such that for any $i\in[n],j\in[m]$
$$
\mu (I_i\times J_j) < c,
$$
or for any representation of $m$ as a sum of positive integers $m = a_1+a_2+\ldots+a_n$ there exists a pair of partitions $I$ of cardinality $n$, $J$ of cardinality $m$, and a map $\sigma: [m]\to [n]$ such that
$$
\forall i\in[n]\ |\sigma^{-1}(i)| = a_i,\quad \forall j\in[m]\ \mu
(I_{\sigma(j)}\times J_j) \ge c.
$$
\end{cor}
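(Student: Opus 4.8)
The plan is to encode the two partitions as points of the two simplices and apply Theorem~\ref{tardos} to a suitable closed cover. First I would parametrize the ordered partitions of $[0,1]$ into $n$ closed subsegments (degenerate ones allowed) by the simplex $\Delta^{n-1}$: to a point $x=(t_1,\ldots,t_n)\in\Delta^{n-1}$ associate the breakpoints $s_0=0$, $s_i=t_1+\cdots+t_i$ and the segments $I_i(x)=[s_{i-1},s_i]$ (consecutive segments overlap only in an endpoint, which is irrelevant for $\mu$). This map is continuous, and the facet $\partial_i\Delta^{n-1}$, where $t_i=0$, consists exactly of those $x$ for which $I_i(x)$ is a single point. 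In the same way I parametrize partitions of $[0,1]$ into $m$ subsegments $J_j(y)$ by $y\in\Delta^{m-1}$, with $\partial_j\Delta^{m-1}$ corresponding to degenerate $J_j(y)$. A point $(x,y)\in\Delta^{n-1}\times\Delta^{m-1}$ then yields precisely a pair of partitions and the associated partition of $Q$ into the rectangles $I_i(x)\times J_j(y)$.

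Next, for $i\in[n]$ and $j\in[m]$ I would set
$$
A_{ij}=\{(x,y)\in\Delta^{n-1}\times\Delta^{m-1}:\ \mu(I_i(x)\times J_j(y))\ge c\}.
$$
Because $\mu$ is absolutely continuous, the $\mu$-measure of a rectangle varies continuously with its corners (absolute continuity of the integral), so $(x,y)\mapsto\mu(I_i(x)\times J_j(y))$ is continuous and each $A_{ij}$ is closed. Moreover, on $\partial_i\Delta^{n-1}\times\Delta^{m-1}$ the segment $I_i(x)$ is degenerate, so the rectangle $I_i(x)\times J_j(y)$ is a segment and has $\mu$-measure $0<c$; the same happens on $\Delta^{n-1}\times\partial_j\Delta^{m-1}$. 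Hence $A_{ij}$ meets neither $\partial_i\Delta^{n-1}\times\Delta^{m-1}$ nor $\Delta^{n-1}\times\partial_j\Delta^{m-1}$, so the family $\{A_{ij}\}$ satisfies the boundary hypotheses of Theorem~\ref{tardos}.

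Then I would split into two cases. If the sets $A_{ij}$ do \emph{not} cover $\Delta^{n-1}\times\Delta^{m-1}$, choose $(x,y)$ lying in none of them; the corresponding pair of partitions satisfies $\mu(I_i(x)\times J_j(y))<c$ for all $i,j$, which is the first alternative. If the $A_{ij}$ \emph{do} cover $\Delta^{n-1}\times\Delta^{m-1}$, then for any representation $m=a_1+\cdots+a_n$ with $a_i$ positive integers I apply Theorem~\ref{tardos} (closed case) to obtain a map $\sigma\colon[m]\to[n]$ with $|\sigma^{-1}(i)|=a_i$ and a point $(x,y)\in\bigcap_{j=1}^m A_{\sigma(j)j}$; the partitions $I=(I_i(x))$, $J=(J_j(y))$ together with $\sigma$ then satisfy $\mu(I_{\sigma(j)}\times J_j)\ge c$ for all $j\in[m]$, which is the second alternative. (Since every $a_i\ge1$, each index of $[n]$ lies in the image of $\sigma$, and $\mu(I_{\sigma(j)}\times J_j)\ge c>0$ forces every $I_i$, and likewise every $J_j$, to be non-degenerate, so these are genuine partitions into lesser segments.)

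I expect no real obstacle here beyond bookkeeping: the two things that need care are the continuity of $(x,y)\mapsto\mu(I_i(x)\times J_j(y))$, which is exactly where absolute continuity of $\mu$ is used (both for closedness of the $A_{ij}$ and for the degenerate-rectangle computation on the facets), and — most importantly — fixing the direction of the defining inequality ``$\ge c$'' rather than ``$<c$'', so that the $A_{ij}$ avoid the prescribed facets and Theorem~\ref{tardos} becomes applicable.
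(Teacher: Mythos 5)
Your proposal is correct and follows essentially the same route as the paper: parametrize partition pairs by $\Delta^{n-1}\times\Delta^{m-1}$ via segment lengths as barycentric coordinates, define the closed sets $A_{ij}=\{(x,y):\mu(I_i(x)\times J_j(y))\ge c\}$, verify the facet-avoidance condition, and split on whether these sets cover the product, applying Theorem~\ref{tardos} in the covering case. The extra details you supply (continuity of the measure of a rectangle, degeneracy on facets) are exactly the points the paper leaves implicit.
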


\begin{defn}
Let $\mathcal F$ be a family of subsets of $\mathbb R^2$, and let $\mathcal L$ be a family of lines in $\mathbb R^2$. We say that $\mathcal L$ \emph{cuts} $\mathcal F$ if for any $F\in\mathcal F$ there is some $l\in\mathcal L$ such that $F\cap l\not=\emptyset$.
\end{defn}

\begin{defn}
Fix some coordinate system in $\mathbb R^2$. We call a straight line in $\mathbb R^2$ \emph{horizontal} if it is parallel to the $x$ axis, and we call it \emph{vertical} if it is parallel to the $y$ axis.
\end{defn}

\begin{cor}
\label{linesection}
Consider a finite family $\mathcal F$ of connected bounded open sets in $\mathbb R^2$. Let $n\le m$ be positive integers. Then either $\mathcal F$ can be cut by a family of $m$ horizontal and $n$ vertical lines, or the following statement holds:

For any $n+1$ positive intergers $a_1,\ldots, a_n, a_{n+1}$ that sum up to $m+1$ there exists a subfamily $\{X_1, \ldots, X_{m+1}\}\subseteq \mathcal F$ and a map $\sigma : [m+1]\to[n+1]$ such that
$$
\forall i\in[n+1]\ |\sigma^{-1}(i)| = a_i,
$$ 
the projections of $X_i$ and $X_j$ to the $y$ axis do not intersect if $i\not=j$, the projections of $X_i$ and $X_j$ to the $x$ axis do not intersect if $\sigma(i)\not=\sigma(j)$.
\end{cor}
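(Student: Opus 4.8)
The plan is to deduce the statement from Theorem~\ref{tardos} applied to the product $\Delta^{n}\times\Delta^{m}$, so that the parameters called $n$ and $m$ in the theorem are played by $n+1$ and $m+1$ here (and indeed $n+1\le m+1$), feeding it a \emph{closed} covering manufactured from $\mathcal F$. First I would normalize: we may assume $\mathcal F$ and all its members are nonempty, and, after a translation and a uniform rescaling (which carries horizontal/vertical lines to horizontal/vertical lines and does not affect any projection condition), that $\bigcup\mathcal F\subseteq(0,1)^2$. I encode a configuration of $n$ vertical lines by a point $\mathbf s=(s_1,\dots,s_{n+1})\in\Delta^n$: set $c_k=s_1+\dots+s_k$ for $k\in[n]$ and $c_0=0$, $c_{n+1}=1$, take the lines $x=c_1,\dots,x=c_n$ and the closed strips $I_i=[c_{i-1},c_i]$; dually $\mathbf u=(u_1,\dots,u_{m+1})\in\Delta^m$ gives cut points $d_l=u_1+\dots+u_l$, horizontal lines $y=d_1,\dots,y=d_m$, and strips $J_1,\dots,J_{m+1}$. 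Writing $\pi_x,\pi_y$ for the coordinate projections, I define for $i\in[n+1]$, $j\in[m+1]$
$$
B_{ij}=\bigl\{(\mathbf s,\mathbf u)\in\Delta^n\times\Delta^m:\ \exists\,F\in\mathcal F\ \text{with}\ \pi_x(\overline F)\subseteq I_i\ \text{and}\ \pi_y(\overline F)\subseteq J_j\bigr\}.
$$

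Next I would verify the three hypotheses of Theorem~\ref{tardos} in its closed form. \emph{Closedness:} for fixed $F$ the condition defining $B_{ij}$ is the conjunction of the weak inequalities $c_{i-1}\le\min\pi_x(\overline F)$, $\max\pi_x(\overline F)\le c_i$, $d_{j-1}\le\min\pi_y(\overline F)$, $\max\pi_y(\overline F)\le d_j$, whose sides are constants (coordinates of the compact set $\overline F$) or linear in $(\mathbf s,\mathbf u)$; so each $B_{ij}$ is a finite union of closed sets. \emph{Facet conditions:} on $\partial_i\Delta^n\times\Delta^m$ one has $s_i=0$, so $I_i$ degenerates to a point, which cannot contain $\pi_x(\overline F)$ since $F$ is a nonempty open planar set and hence $\pi_x(F)$ is a nondegenerate interval; thus $B_{ij}\cap(\partial_i\Delta^n\times\Delta^m)=\emptyset$, and symmetrically $B_{ij}\cap(\Delta^n\times\partial_j\Delta^m)=\emptyset$. \emph{Covering:} here the hypothesis enters. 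If $\mathcal F$ can be cut by $m$ horizontal and $n$ vertical lines the first alternative holds and we are done; otherwise, for every $(\mathbf s,\mathbf u)$ the $m+n$ lines above (padded by useless lines to exactly $m$ and $n$ if some coincide or fall at $0$ or $1$) fail to cut $\mathcal F$, so some $F\in\mathcal F$ avoids all of them, and since $F$ is connected with $\pi_x(F)\subseteq(0,1)$ missing $\{c_1,\dots,c_n\}$, it lies in one component $(c_{i-1},c_i)\subseteq I_i$, and likewise $\pi_y(F)\subseteq J_j$, whence $(\mathbf s,\mathbf u)\in B_{ij}$. So $\bigcup_{i,j}B_{ij}=\Delta^n\times\Delta^m$.

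Then I apply Theorem~\ref{tardos} with the covering $\{B_{ij}\}$ and the given integers $a_1,\dots,a_{n+1}$ (summing to $m+1$): there is $\sigma:[m+1]\to[n+1]$ with $|\sigma^{-1}(i)|=a_i$ and a point $(\mathbf s,\mathbf u)\in\bigcap_{j=1}^{m+1}B_{\sigma(j),j}$. For each $j$ pick a witness $X_j\in\mathcal F$, so $\pi_x(X_j)\subseteq[c_{\sigma(j)-1},c_{\sigma(j)}]$ and $\pi_y(X_j)\subseteq[d_{j-1},d_j]$. The remaining point is the disjointness of projections, and this is the one place openness is used: $X_j$ is open and connected, so $\pi_y(X_j)$ is an open interval, and for $i<j$ we have $\sup\pi_y(X_i)\le d_i\le d_{j-1}\le\inf\pi_y(X_j)$, so these two open intervals are disjoint; hence $\pi_y(X_i)\cap\pi_y(X_j)=\emptyset$ for all $i\ne j$. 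The identical argument with the $c$'s in place of the $d$'s gives $\pi_x(X_i)\cap\pi_x(X_j)=\emptyset$ whenever $\sigma(i)\ne\sigma(j)$.

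The main obstacle I anticipate is exactly the friction between openness and closedness. To use the closed form of Theorem~\ref{tardos} one needs a closed covering, yet the naive ``open rectangle'' description of $B_{ij}$ is not closed, while the ``closed rectangle'' description above produces a priori only projections with disjoint \emph{interiors}, since adjacent closed strips share an endpoint. What rescues the argument is that the selected $X_j$ are themselves open, so their projections are honest open intervals, and being confined to adjacent closed strips then forces genuine disjointness; everything else is bookkeeping — the index shift $n\mapsto n+1$, $m\mapsto m+1$, the normalization $\bigcup\mathcal F\subseteq(0,1)^2$, and discarding empty members of $\mathcal F$.
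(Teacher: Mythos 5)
Your proposal is correct and follows essentially the same route as the paper: parameterize the line configurations by $\Delta^n\times\Delta^m$, define the closed sets $B_{ij}$ (which coincide with the paper's $A_{ij}$, since for a connected bounded open $F$ the condition $\pi_x(\overline F)\subseteq I_i$, $\pi_y(\overline F)\subseteq J_j$ is equivalent to $F\subset\int(I_i\times J_j)$), check the facet and covering conditions, and apply Theorem~\ref{tardos} with parameters $n+1$, $m+1$. You merely spell out the verifications (closedness, the covering dichotomy, and the final disjointness of the open projections) that the paper leaves implicit.
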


This corollary is also true for families of compact sets. We can take their open $\varepsilon$-neighborhoods, go to the limit $\varepsilon\to 0$, and use the standard compactness considerations. The next statement is a corollary of Corollary~\ref{linesection}, which is formulated much shorter. It may be considered as a Helly-type theorem for straight line cuts.

\begin{cor}
\label{linesection2}
Consider a finite family $\mathcal F$ of connected bounded open (compact) sets in $\mathbb R^2$. Let $n\le m$ be positive integers. Suppose that every subfamily $\mathcal G\subseteq\mathcal F$ with cardinality $|\mathcal G|\le m+1$ can be cut by either $m$ horizontal, or $n$ vertical lines. Then $\mathcal F$ can be cut by a family of $m$ horizontal and $n$ vertical lines.
\end{cor}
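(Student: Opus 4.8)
The plan is to argue by contradiction from Corollary~\ref{linesection}. I treat the open case first; the compact case then follows by passing to open $\varepsilon$-neighborhoods and letting $\varepsilon\to 0$, as already indicated. So suppose $\mathcal F$ cannot be cut by a family of $m$ horizontal and $n$ vertical lines. Since $m\ge n$ we may fix a representation $m+1 = a_1+\cdots+a_{n+1}$ into positive integers, for instance $a_1 = m+1-n$, $a_2=\cdots=a_{n+1}=1$. Corollary~\ref{linesection} then produces a subfamily $\mathcal G = \{X_1,\ldots,X_{m+1}\}\subseteq\mathcal F$ and a map $\sigma:[m+1]\to[n+1]$ with $|\sigma^{-1}(i)| = a_i$ for all $i$, such that the $y$-projections of $X_i$ and $X_j$ are disjoint whenever $i\ne j$, and the $x$-projections of $X_i$ and $X_j$ are disjoint whenever $\sigma(i)\ne\sigma(j)$. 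We may assume every member of $\mathcal F$ is nonempty, since otherwise the hypothesis is already violated by a one-element subfamily and the implication holds vacuously; then the pairwise disjoint $y$-projections force the $X_i$ to be distinct, so $\mathcal G$ is an honest subfamily of cardinality $m+1\le m+1$, and the hypothesis applies to it: $\mathcal G$ is cut by $m$ horizontal lines or by $n$ vertical lines. I will contradict both possibilities.

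If $\mathcal G$ is cut by $m$ horizontal lines: a horizontal line $y=c$ meets $X_i$ precisely when $c$ belongs to the $y$-projection of $X_i$, and since these projections are pairwise disjoint, $c$ lies in at most one of them. Hence each of the $m$ lines meets at most one member of $\mathcal G$, so together they meet at most $m$ of the $m+1$ sets $X_i$ --- a contradiction. If $\mathcal G$ is cut by $n$ vertical lines: a vertical line $x=d$ meeting two sets $X_i$, $X_j$ with $\sigma(i)\ne\sigma(j)$ would put $d$ into two disjoint $x$-projections, which is impossible, so every vertical line meets only sets $X_i$ sharing one common value of $\sigma(i)$. Assigning to each of the $n$ lines that value (arbitrarily if the line meets no $X_i$) uses at most $n$ of the $n+1$ indices in $[n+1]$, so some index $i_0$ is assigned to no line; since $a_{i_0}\ge 1$ the class $\sigma^{-1}(i_0)$ is nonempty, and none of its members is met by any of the $n$ vertical lines --- again a contradiction. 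Both cases being impossible, $\mathcal F$ is cut by $m$ horizontal and $n$ vertical lines.

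I do not anticipate a serious obstacle here: all the genuine content sits in Corollary~\ref{linesection}, and what remains is the combinatorial bookkeeping of the two cases. The one point that must be handled with care is the pigeonhole argument in the vertical case --- one needs $n+1$ color classes against only $n$ lines, and one needs every class to be nonempty, which is precisely why the integers $a_i$ are taken positive and why Corollary~\ref{linesection} is stated with $n+1$ rather than $n$ colors.
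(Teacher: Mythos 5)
Your proof is correct and matches the paper's intent: the paper gives no explicit proof of this corollary, only the remark that it follows from Corollary~\ref{linesection}, and your contrapositive argument (extract the subfamily $\{X_1,\ldots,X_{m+1}\}$ with the map $\sigma$, then rule out both a cut by $m$ horizontal lines via the $m+1$ pairwise disjoint $y$-projections and a cut by $n$ vertical lines via the $n+1$ nonempty colour classes with disjoint $x$-projections) is exactly the intended derivation. The care you take with nonemptiness of the $a_i$ and of the classes $\sigma^{-1}(i)$ is precisely the point that makes the pigeonhole step work.
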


The proof of Theorem~\ref{tardos}, given below, can be generalized to the case of product of more than two simplices. 

\begin{defn}
Denote the full $r$-partite $r$-uniform hypergraph on the vertices $[n]\cup[n]\cup\dots\cup[n]$ ($r$ summands) as $H(n, r)$. The set of edges of this hypergraph is $E(H(n, r)) = [n]^r$.
\end{defn}

\begin{defn}
A \emph{matching} $M$ on a hypergraph $H$ is a subset of its edge set $M\subseteq E(H)$ such that any two distinct members of $M$ are disjoint.
\end{defn}

\begin{thm}
\label{tardos-r} Denote $\Delta_1,\ldots, \Delta_r$ some $r$ simplices of dimension $n$. Suppose the product $\Delta_1\times\Delta_2\times\dots\times\Delta_r$ is covered with $n^r$ open (closed) sets $A_{i_1\dots i_r}$ ($j\in[r]$, $i_j\in [n]$) so that every $A_{i_1\dots i_r}$ does not intersect any of the sets (for any $l\in [r]$)
$$
\Delta_1\times\dots\times\partial_{i_l} \Delta_l\times\dots\times\Delta_r.
$$
Then there is a matching $M$ on $H(n,r)$ of size at least $\lceil\dfrac{n}{r-1}\rceil$ such that the family
$\{A_{i_1\dots i_r} : (i_1,\ldots, i_r)\in M\}$ has a common point.
\end{thm}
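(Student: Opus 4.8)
The plan is to run the ``partition of unity $+$ degree theory'' argument behind Theorem~\ref{tardos}, now with $r$ factors in place of two, and to replace its final appeal to Hall's theorem (which in the two-simplex case converts a nonnegative matrix with prescribed row- and column-sums into a map $\sigma$) by a fractional-matching estimate for $r$-partite $r$-uniform hypergraphs. That last replacement is precisely where the factor $r-1$, and hence the bound $\lceil n/(r-1)\rceil$, comes from.

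First I would reduce to the case of an open cover: if the $A_{i_1\dots i_r}$ are closed, each is disjoint from, hence at positive distance from, every compact facet-cylinder $\Delta_1\times\dots\times\partial_{i_l}\Delta_l\times\dots\times\Delta_r$, so replacing each $A_{i_1\dots i_r}$ by a sufficiently small open $\varepsilon$-neighbourhood preserves all hypotheses; one then solves the open problem for each $\varepsilon$ and, as $\varepsilon\to 0$, extracts a convergent point together with one fixed matching $M$. For the open case, pick a partition of unity $\{\phi_{i_1\dots i_r}\}$ on $P=\Delta_1\times\dots\times\Delta_r$ subordinate to the cover, so $\suppo\phi_{i_1\dots i_r}\subseteq A_{i_1\dots i_r}$; the boundary hypothesis then reads: $\phi_{i_1\dots i_r}(x^{(1)},\dots,x^{(r)})>0$ forces the $i_l$-th barycentric coordinate of $x^{(l)}$ to be positive, for every $l$. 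Put $g^{(l)}_i=\sum_{i_l=i}\phi_{i_1\dots i_r}$ and $F=(g^{(1)},\dots,g^{(r)})\colon P\to P$. Then $F$ carries each facet $\Delta_1\times\dots\times\partial_i\Delta_l\times\dots\times\Delta_r$ of $P$ into itself, hence each face of $P$ into itself; a continuous self-map of a polytope with that property is homotopic, through such maps, to the identity (straight-line homotopy, using convexity of the faces), so it has degree one and is onto. In particular $F$ hits the barycentre $\mathbf b=(b,\dots,b)$, $b=(1/n,\dots,1/n)$: there is $\mathbf p\in P$ with $F(\mathbf p)=\mathbf b$.

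Now consider $\Phi=\bigl(\phi_{i_1\dots i_r}(\mathbf p)\bigr)_{(i_1,\dots,i_r)\in[n]^r}$, a probability distribution on $[n]^r$ all of whose $r$ one-dimensional marginals are uniform. Its support $G$ is a subhypergraph of $H(n,r)$, and $n\Phi$ is a fractional matching of $G$ of total weight $n$ (the weights on the edges through any vertex of $H(n,r)$ sum to $n\cdot\tfrac1n=1$), so the fractional matching number satisfies $\nu^{*}(G)\ge n$. By the theorem of Füredi that every $r$-partite $r$-uniform hypergraph obeys $\nu^{*}\le(r-1)\nu$, we get $\nu(G)\ge n/(r-1)$, hence $\nu(G)\ge\lceil n/(r-1)\rceil$ since $\nu(G)$ is an integer. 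Take a matching $M\subseteq G$ of this size; for each $(i_1,\dots,i_r)\in M$ we have $\phi_{i_1\dots i_r}(\mathbf p)>0$, so $\mathbf p\in\suppo\phi_{i_1\dots i_r}\subseteq A_{i_1\dots i_r}$, and $\mathbf p$ is the desired common point of the family indexed by $M$.

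Everything through the construction and surjectivity of $F$ is the proof of Theorem~\ref{tardos} essentially verbatim, with $r$ replacing $2$; the only genuinely new ingredient is the last paragraph's claim that a tensor with uniform marginals has a large matching in its support. For $r=2$ that claim is Hall's theorem and yields a perfect matching (recovering Theorem~\ref{tardos} with all $a_i=1$), whereas for $r\ge3$ no exact analogue exists and one must accept the $(r-1)$-factor loss coming from Füredi's inequality — which is exactly why the conclusion is a matching of size $\lceil n/(r-1)\rceil$ and not something larger. I expect this hypergraph-matching input to be the main point; the topological portion is routine once Theorem~\ref{tardos} is in hand.
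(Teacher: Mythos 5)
Your argument is correct and follows the paper's proof essentially step for step: partition of unity, the marginal maps $g_k$ whose product is surjective by a degree/cohomology argument, a point where all marginals equal $1/n$, and then a fractional-to-integral matching bound giving $\lceil n/(r-1)\rceil$. The only (harmless) variations are that you justify surjectivity by a straight-line homotopy instead of the inductive excision argument of Lemma~\ref{simmap}, and you invoke F\"uredi's inequality $\nu^*\le (r-1)\nu$ for $r$-partite hypergraphs where the paper packages the same fact as Lemma~\ref{hmatching}, attributed to F\"uredi--Kahn--Seymour.
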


This theorem gives more straightforward (from the author's viewpoint) way to prove Theorem~1.4 from~\cite{kaiser1997} on piercing of $r$-intervals. Another approach to $r$-intervals, giving the same result, can be found in~\cite{berg2005}.

\section{Proof of Theorem~\ref{tardos}}

First we need some lemmas.

\begin{lem}
\label{simmap} Let a continuous map $f:\Delta^d\to\Delta^d$ map every face of $\Delta^d$ to itself. Then the cohomology map $f^*: H^d(\Delta^d, \partial\Delta^d)\to H^d(\Delta^d, \partial\Delta^d)$ is the identity map and the map $f$ is surjective.
\end{lem}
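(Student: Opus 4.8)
The plan is to deduce everything from a single observation: $f$ is homotopic to the identity \emph{through maps of the pair} $(\Delta^d,\partial\Delta^d)$. Consider the straight--line homotopy
$$
H\colon \Delta^d\times[0,1]\to\Delta^d,\qquad H(x,t)=(1-t)\,x+t\,f(x),
$$
which is well defined because $\Delta^d$ is convex, is continuous, and satisfies $H(\cdot,0)=\mathrm{id}$, $H(\cdot,1)=f$. The key point is that $H(\cdot,t)$ maps each face of $\Delta^d$ into itself for every $t$: if $x$ lies in a face $\tau$, then $f(x)\in\tau$ by hypothesis, and since $\tau$ is convex the whole segment $[x,f(x)]$ — in particular the point $H(x,t)$ — lies in $\tau$. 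Letting $\tau$ run over the facets $\partial_i\Delta^d$ shows $H(\partial\Delta^d\times[0,1])\subseteq\partial\Delta^d$, so $H$ is a homotopy of maps $(\Delta^d,\partial\Delta^d)\to(\Delta^d,\partial\Delta^d)$. Hence $f$ and $\mathrm{id}$ induce the same homomorphism on $H^d(\Delta^d,\partial\Delta^d)$; that is, $f^*$ is the identity. (Only the hypothesis that $f$ preserves each facet is actually used here.)

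It remains to prove surjectivity. Since $H^d(\Delta^d,\partial\Delta^d)\cong\mathbb Z$ is nonzero, $f^*=\mathrm{id}$ is a nonzero homomorphism. Suppose, for contradiction, that some interior point $p\in\int\Delta^d$ is not in the image of $f$. Then $f$ factors as
$$
(\Delta^d,\partial\Delta^d)\ \xrightarrow{\ \bar f\ }\ (\Delta^d\setminus\{p\},\partial\Delta^d)\ \hookrightarrow\ (\Delta^d,\partial\Delta^d),
$$
where $\bar f$ is $f$ with restricted target (legitimate since $f(\Delta^d)\subseteq\Delta^d\setminus\{p\}$ and $f(\partial\Delta^d)\subseteq\partial\Delta^d$). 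Radial projection from $p$ deformation retracts $\Delta^d\setminus\{p\}$ onto $\partial\Delta^d$, so $H^d(\Delta^d\setminus\{p\},\partial\Delta^d)=0$, forcing $f^*=0$ — contradicting $f^*=\mathrm{id}\neq 0$. Therefore $\int\Delta^d\subseteq f(\Delta^d)$; as $f(\Delta^d)$ is the continuous image of a compact set it is closed, and containing the dense subset $\int\Delta^d$ it must equal all of $\Delta^d$.

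There is no serious obstacle in this argument; the two points needing (minimal) care are that the straight--line homotopy is genuinely a homotopy \emph{of pairs} — which is exactly what the face--preservation hypothesis supplies — and the standard computation $H^d(\Delta^d\setminus\{p\},\partial\Delta^d)=0$. If one prefers to avoid the convexity trick, the alternative is an induction on $d$: the long exact sequence of $(\Delta^d,\partial\Delta^d)$ together with contractibility of $\Delta^d$ identifies $H^d(\Delta^d,\partial\Delta^d)$ with $H^{d-1}(\partial\Delta^d)$, $f$--equivariantly; writing $\partial\Delta^d=F\cup G$ with $F$ one facet and $G$ the (contractible, cone--shaped) union of the remaining facets, so that $F\cap G=\partial F$ and $f$ preserves $F$, $G$ and $\partial F$, excision gives $H^{d-1}(\partial\Delta^d)\cong H^{d-1}(F,\partial F)$, and the inductive hypothesis applies to $f|_F$. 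The convexity argument is simply the shortest route.
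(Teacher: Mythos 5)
Your argument is correct, but it reaches the conclusion by a genuinely different route than the paper. The paper proves $f^*=\mathrm{id}$ by induction on $d$: it applies the inductive hypothesis to the restriction of $f$ to a facet $F$, transports the result to $H^{d-1}(\partial\Delta^d)$ via excision (with $K=\partial\Delta^d\setminus\int F$), and then uses naturality of the coboundary isomorphism $H^{d-1}(\partial\Delta^d)\to H^d(\Delta^d,\partial\Delta^d)$ --- essentially the alternative you sketch in your closing paragraph. You instead observe that the straight-line homotopy from $\mathrm{id}$ to $f$ stays inside each facet by convexity, hence is a homotopy of pairs, which yields $f^*=\mathrm{id}$ in one line; this is shorter, uses only that facets (not all faces) are preserved, but leans on the linear structure of the simplex, whereas the inductive excision argument is the one that transfers to settings without convexity (such as the tree-space versions of KKM cited in the introduction). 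For surjectivity both proofs factor $f$ through $(\Delta^d\setminus\{p\},\partial\Delta^d)$, whose top relative cohomology vanishes; the difference is in how boundary points are handled: the paper gets them from the inductive hypothesis (the restriction to each facet is already surjective), while you first capture the interior and then note that $f(\Delta^d)$ is compact, hence closed, and contains the dense set $\int\Delta^d$. Both arguments are sound; yours is self-contained and avoids the induction entirely.
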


\begin{proof}
Let us use induction on $d$. The case $d=0$ is trivial. Consider a facet $F\subset\Delta^d$. This is a simplex of dimension $d-1$ and the statement is true for $F$.

Put $K = \partial\Delta^d\setminus\int F$. The map 
$$
f|_{F}^*:  H^{d-1} (F, \partial F)\to H^{d-1} (F, \partial F)
$$
is the identity map by the inductive assumption.

By the excision isomorphism the cohomology map is the identity on $H^{d-1}(\partial\Delta^d, K)$ and, therefore, on $H^{d-1}(\partial\Delta^d)$. Now the natural coboundary isomorphism $\partial: H^{d-1}(\partial\Delta^d)\to H^d(\Delta^d, \partial\Delta^d)$ along with the commutative diagram
$$
\begin{CD}
H^{d-1}(\partial\Delta^d) @<{f^*}<< H^{d-1} (\partial\Delta^d)\\
@V{\partial}VV @V{\partial}VV\\
H^d(\Delta^d, \partial\Delta^d) @<{f^*}<< H^d (\Delta^d, \partial\Delta^d)
\end{CD}
$$
tells that $f^*$ is the identity of $H^d(\Delta^d, \partial\Delta^d)$.

Now let us prove the surjectivity. Assume the contrary: let $x\in\Delta^d$, and let $x\not\in f(\Delta^d)$. By the inductive assumption $x\not\in\partial\Delta^d$. So the map $f$ passes through the inclusion $i : (\Delta^d\setminus\{x\}, \partial\Delta^d)\to (\Delta^d, \partial\Delta^d)$, but the cohomology $H^d(\Delta^d\setminus\{x\}, \partial\Delta^d)=H^d(\partial\Delta^d, \partial\Delta^d)=0$. This is a contradiction.
\end{proof}

The following lemma is a generalized Hall's lemma, as it was used in~\cite{kar2005}.

\begin{lem}
\label{hall}
Let $G$ be a bipartite graph on vertices $V\cup W$, $|V| = n$, $|W|=m$ $(n\le m)$. Let $a(v)$ for every $v\in V$ be a positive integer, and let $\sum_{v\in V} a(v) = m$. 

Suppose that for every nonempty subset $V'\subseteq V$ the number of vertices in $W$ that are adjacent to some vertex $V'$ is at least $\sum_{v\in V'} a(v)$. 

Then there exists a map $\sigma: W\mapsto V$ such that $\forall w\in W$ the pair $(w, \sigma(w))$ is an edge of $G$ and $\forall v\in V$ $|\sigma^{-1}(v)| = a(v)$.
\end{lem}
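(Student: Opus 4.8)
The plan is to reduce the statement to the classical Hall marriage theorem by a vertex blow-up. First I would replace each vertex $v\in V$ by $a(v)$ distinct copies $v^{(1)},\dots,v^{(a(v))}$, forming a new vertex set $\widetilde V$ of size $\sum_{v\in V}a(v)=m=|W|$, and build a bipartite graph $\widetilde G$ on $\widetilde V\cup W$ in which each copy $v^{(k)}$ is joined to exactly the $W$-neighbours of $v$ in $G$. A perfect matching of $\widetilde G$ then descends to the desired $\sigma$: if $w$ is matched to a copy of $v$, put $\sigma(w)=v$; since the matching saturates $\widetilde V$, every copy of $v$ is used, so $|\sigma^{-1}(v)|=a(v)$, and since it also saturates $W$ (equal cardinalities) the map $\sigma$ is defined on all of $W$ with every pair $(w,\sigma(w))$ an edge of $G$.

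Next I would verify the classical Hall condition for $\widetilde G$ with respect to the side $\widetilde V$. Given $S\subseteq\widetilde V$, let $U\subseteq V$ be the set of original vertices having at least one copy in $S$; if $S=\emptyset$ the condition is vacuous, so assume $U\neq\emptyset$. By construction the neighbourhood of $S$ in $W$ equals the neighbourhood $N(U)$ of $U$ in $G$, while $|S|\le\sum_{v\in U}a(v)$ because each $v\in U$ contributes at most $a(v)$ copies. The hypothesis of the lemma applied to $V'=U$ gives $|N(U)|\ge\sum_{v\in U}a(v)\ge|S|$, so Hall's condition $|N(S)|\ge|S|$ holds. Hence $\widetilde G$ has a matching saturating $\widetilde V$, and as $|\widetilde V|=|W|$ this matching is perfect, which completes the argument.

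Since the reduction is essentially mechanical, there is no substantial obstacle; the only point requiring a little care is the bookkeeping between subsets of $\widetilde V$ and subsets of $V$ --- in particular that passing from $S$ to $U$ does not change the relevant neighbourhood, and that equality of the two side-cardinalities upgrades a one-sided matching to a perfect one. An alternative would be to phrase the claim directly as an integral max-flow problem on the obvious transportation network (a source feeding each $v$ with capacity $a(v)$, unit-capacity edges from $V$ to $W$, and unit capacities from $W$ to a sink) and invoke the max-flow--min-cut theorem together with integrality of the optimal flow; the min-cut bound is then exactly the displayed inequality above. I would prefer the blow-up version, since it keeps everything within elementary bipartite graph theory and does not require setting up flow notation.
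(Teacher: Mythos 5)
Your proof is correct. The paper itself gives no proof of Lemma~\ref{hall}: it is stated as a known ``generalized Hall's lemma'' with a pointer to~\cite{kar2005}, so there is no argument in the text to compare yours against. Your blow-up reduction --- replacing each $v\in V$ by $a(v)$ copies with the same neighbourhood, checking that the classical Hall condition for a subset $S$ of the blown-up side follows from the hypothesis applied to the set $U$ of originals represented in $S$ (since $N(S)=N(U)$ and $|S|\le\sum_{v\in U}a(v)\le|N(U)|$), and then descending a perfect matching to the map $\sigma$ --- is the standard way to derive this statement from the marriage theorem, and every step checks out; the max-flow alternative you sketch is equally valid. The only stylistic remark is that the equality $|\sigma^{-1}(v)|=a(v)$ deserves the one-line justification you give (the matching saturates all $a(v)$ copies of $v$ and matches them to distinct elements of $W$); with that, the argument is complete.
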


Now we are ready to prove Theorem~\ref{tardos}. By the standard neighborhood-compactness reasoning we only have to consider the case of open $A_{ij}$. Then we pass to a partition of unity $\phi_{ij} : \Delta^{n-1}\times\Delta^{m-1}\to\mathbb R$, subordinated to the covering $A_{ij}$.

Consider the functions
$$
f_i(x) = \sum_{j=1}^m \phi_{ij}(x)\quad g_j(x) = \sum_{i=1}^n
\phi_{ij}(x).
$$
These two sets of functions give the maps 
$$
f: \Delta^{n-1}\times\Delta^{m-1}\to\Delta^{n-1},\quad g: \Delta^{n-1}\times\Delta^{m-1}\to\Delta^{m-1}
$$
Now fix some inclusions $\iota_1 : \Delta^{n-1}\times\{y\}\to \Delta^{n-1}\times\Delta^{m-1}$ and $\iota_2:\{x\}\times\Delta^{m-1}\to \Delta^{n-1}\times\Delta^{m-1}$. It is easy to see that the conditions of Lemma~\ref{simmap} are satisfied for $f\circ\iota_1$ and $g\circ\iota_2$. Indeed, $f_i$ is zero on $\partial_i\Delta^{n-1}\times \Delta^{m-1}$ and $g_j$ is zero on $\Delta^{n-1}\times \partial_j\Delta^{m-1}$. Then by Lemma~\ref{simmap} the maps
$$
f^* : H^{n-1}(\Delta^{n-1}, \partial \Delta^{n-1})\to
H^*(\Delta^{n-1}\times\Delta^{m-1},\partial\Delta^{n-1}\times\Delta^{m-1})
$$
and
$$
g^* : H^{m-1}(\Delta^{m-1}, \partial \Delta^{m-1})\to
H^*(\Delta^{n-1}\times\Delta^{m-1},\Delta^{n-1}\times\partial\Delta^{m-1})
$$
are identity maps. From the standard properties of $\times$-product and $\cup$-product on cohomology the map $(f\times g)^*$ is the identity map on $H^{n+m-2}(\Delta^{n-1}\times\Delta^{m-1}, \partial(\Delta^{n-1}\times\Delta^{m-1}))$.

Similar to the proof of Lemma~\ref{simmap}, the map $f\times g$ is surjective and there exists some $x\in\Delta^{n-1}\times\Delta^{m-1}$ such that
$$
\forall i\in[n]\ f_i(x) = \frac{a_i}{m}\quad \forall j\in[m]\
g_j(x) = \frac{1}{m}.
$$
Considering the matrix $\phi_{ij}(x)$ we see that its $i$-th row sum is $a_i/m$ and its column sums all equal $1/m$. Consider the bipartite graph $G$ on vertices $[n]\times[m]$ such that $(i, j)\in G$ iff $\phi_{ij}(x)>0$. It is easy to see that the conditions of Lemma~\ref{hall} are satisfied for $G$. Hence there exists a map $\sigma: [m]\to[n]$ such that
$$
\forall i\in[n]\ |\sigma^{-1}(i)| = a_i,\quad\forall j\in[m]\ \phi_{\sigma(j)j}(x)> 0,
$$
and it follows that $x\in A_{\sigma(j)j}$ for any $j\in[m]$.

\section{Proof of Theorem~\ref{tardos-r}}

Instead of Hall's lemma we use the following lemma that follows from Theorem~1.1 in~\cite{fks1993}.

\begin{lem}
\label{hmatching}
Let the hypergraph $H$ be a subhypergraph of $H(n, r)$. Let edges of $H$ have weights $w(e)$ so that for any vertex $v\in V(H)$
$$
\sum_{e\ni v} w(e) = 1/n
$$
and 
$$
\sum_{e\in E(H)} w(e) = 1.
$$
Then $H$ has a matching of size $\lceil \dfrac{n}{r-1}\rceil$.
\end{lem}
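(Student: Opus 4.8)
The plan is to derive the statement from Theorem~1.1 of~\cite{fks1993} after a rescaling of the weights. The starting observation is that $H$ inherits from $H(n,r)$ the structure of an $r$-partite, $r$-uniform hypergraph: its vertex set splits into $r$ blocks (copies of $[n]$) and every edge of $H$ meets each block in exactly one vertex. This is precisely the setting in which a fractional matching of value $t$ forces an integral matching of size at least $t/(r-1)$.

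First I would replace $w$ by $w'(e) := n\,w(e)$. The hypothesis $\sum_{e\ni v} w(e) = 1/n$ turns into $\sum_{e\ni v} w'(e) = 1$ for every vertex $v$, so $w'$ satisfies the fractional matching inequalities $0 \le w'(e)$, $\sum_{e\ni v} w'(e) \le 1$, and it has total weight $\sum_{e \in E(H)} w'(e) = n$. Hence the fractional matching number of $H$ is at least $n$. I want to stress that this rescaling is not cosmetic: applied to the original $w$, whose total weight is only $1$, the theorem below would merely produce a matching of size~$1$.

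Next I would apply Theorem~1.1 of~\cite{fks1993} to $H$ together with the fractional matching $w'$: in an $r$-partite $r$-uniform hypergraph the gap between the fractional and the integral matching numbers is at most a factor $r-1$, so $H$ contains an integral matching $M \subseteq E(H)$ with $|M| \ge n/(r-1)$. Since $|M|$ is a nonnegative integer, this gives $|M| \ge \lceil n/(r-1)\rceil$, as required. One may moreover take $M$ inside the support $\{e : w'(e) > 0\} = \{e : w(e) > 0\}$, which is the form used in the proof of Theorem~\ref{tardos-r}.

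There is no genuine obstacle beyond quoting the cited result in the correct form. The one point that must not be slipped is that the sharp factor $r-1$ is being used --- legitimate here only because $H$ is $r$-partite --- rather than the factor $r-1+1/r$ that is all one can guarantee for arbitrary $r$-uniform hypergraphs and which would be too weak to yield the stated bound.
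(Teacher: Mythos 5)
Your derivation is correct and follows the same route as the paper, which simply states that the lemma ``follows from Theorem~1.1 in~\cite{fks1993}'' without further detail. Your write-up supplies exactly the missing bookkeeping --- the rescaling $w'=nw$ to get a fractional matching of value $n$, the use of the $r$-partite bound $\nu^*\le (r-1)\nu$ rather than the general $r-1+1/r$ factor, and the rounding to $\lceil n/(r-1)\rceil$ --- so there is nothing to correct.
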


Then, as in the proof of Theorem~\ref{tardos} we pass to the partition of unity by functions $\phi_{i_1\dots i_r}$. We may consider the indexes $(i_1, \ldots, i_r)$ as edges of $H(n, r)$.

Then we consider the functions on $\Delta_1\times\dots\times\Delta_r$ for $k\in [r], l\in [n]$
$$
f_{kl}(x) = \sum_{i_k = l} \phi_{i_1\dots i_r} (x).
$$
For any fixed $k$ the functions $f_{kl}$ give a map $g_k : \Delta_1\times\dots\times \Delta_r\to \Delta^{n-1}$ that induces an identity map
$$
g_k^* : H^{n-1}(\Delta^{n-1}, \partial\Delta^{n-1}) \to H^{n-1} (\Delta_1\times\dots\times \Delta_r, 
\Delta_1\times\dots\times\partial\Delta_k\times\dots\times\Delta_r)
$$
by Lemma~\ref{simmap}.

Again, the Cartesian product of maps $g=g_1\times\dots\times g_r$ gives an identity map
$$
g^* : H^{n-1}((\Delta^{n-1})^r, \partial(\Delta^{n-1})^r) \to H^{n-1} (\Delta_1\times\dots\times \Delta_r, \partial(\Delta_1\times\dots\times\Delta_r)),
$$
and the map $g$ is surjective. Then from the surjectivity we find $x\in\Delta_1\times\dots\times\Delta_r$ such that for any $k\in[r], l\in [n]$
$$
f_{kl}(x) = \sum_{i_k=l} \phi_{i_1\dots i_r}(x) = 1/n.
$$
Denote $H$ the hypergraph with the set of edges 
$$
E(H) = \{(i_1,\ldots, i_r)\in [n]^r : \phi_{i_1\dots i_r}(x) > 0\},
$$
and let the weight of an edge $(i_1,\ldots, i_r)$ be $\phi_{i_1\dots i_r}(x)$. By Lemma~\ref{hmatching} there is a matching on $H$ of size at least $\lceil\dfrac{n}{r-1}\rceil$ and the proof is complete.

\section{Proofs of the corollaries}

\begin{proof}[Proof of Corollary~\ref{squarepart}]
The space of partition pairs of cardinalities $n$ and $m$ is parameterized by $P = \Delta^{n-1}\times\Delta^{m-1}$, if we take the lengths of the segments of the partition as barycentric coordinates in a simplex. Denote the subsets of $P$
$$
A_{ij} = \{(I_1,\ldots,I_n,J_1,\ldots,J_m) : \mu(I_i\times J_j) \ge c\}.
$$
The sets $A_{ij}$ satisfy the condition on the intersection with boundary of Theorem~\ref{tardos}. If they cover $P$ then by Theorem~\ref{tardos} for any number partition $a_1+\ldots+a_n=m$ there is a map $\sigma: [m]\to[n]$ and a point $x\in P$ such that for every $j\in[m]$ $x\in A_{\sigma(j)j}$. This is equivalent to the second alternative of the corollary.

If the sets $A_{ij}$ do not cover $P$, then the first alternative holds.
\end{proof}

\begin{proof}[Proof of Corollary~\ref{linesection}]
We can assume that all the sets $\mathcal F$ are contained in some square $Q=[a, b]\times[a, b]$. Moreover, we can apply a homothety and consider the case $Q=[0, 1]\times[0, 1]$.

The configuration space of all families of $n$ horizontal and $m$ vertical lines, intersecting $Q$ is naturally isomorphic to the space of pairs of partitions of segments with cardinalities $n+1$ and $m+1$. Denote the configuration space $P = \Delta^n\times\Delta^m$ and define its subspaces
$$
A_{ij} = \{(I_1,\ldots,I_{n+1},J_1,\ldots,J_{m+1}) : \exists X\in\mathcal F\ :\ X\subset \int(I_i\times J_j)\}.
$$
The subsets $A_{ij}\subseteq P$ are closed and satisfy the condition on the intersection with boundary of Theorem~\ref{tardos}. 

If the sets $A_{ij}$ cover $P$, then for any number partition $a_1+\ldots+a_{n+1}=m+1$ there is a corresponding map $\sigma: [m+1]\to[n+1]$ and the second alternative of this corollary holds.

If $A_{ij}$ do not cover $P$, then the first alternative of this corollary holds.
\end{proof}

\end{document}